\lstdefinelanguage{Julia}%
  {morekeywords={abstract,break,case,catch,const,continue,do,else,elseif,%
      end,export,false,for,function,immutable,import,importall,if,in,%
      macro,module,otherwise,quote,return,true,try,type,typealias,%
      using,while},%
   sensitive=true,%
   morecomment=[l]\#,%
   morecomment=[n]{\#=}{=\#},%
   morestring=[s]{"}{"},%
   morestring=[m]{'}{'},%
 }[keywords,comments,strings]%
\bfseries\color{blue},
\DeclareMathAlphabet{\dutchcal}{U}{dutchcal}{m}{n}
\newcommand{\dcf}{\dutchcal{f}}
\newcommand{\dcF}{\mathcal{F}}
\newcommand{\dcG}{\mathcal{G}}
\newcommand{\dcI}{\mathcal{I}}
\newcolumntype{L}[1]{>{\raggedright\let\newline\\\arraybackslash\hspace{0pt}}m{#1}}
\newcolumntype{C}[1]{>{\centering\let\newline\\\arraybackslash\hspace{0pt}}m{#1}}
\newcolumntype{R}[1]{>{\raggedleft\let\newline\\\arraybackslash\hspace{0pt}}m{#1}}
\newtheoremstyle{theoremstyle}
{10pt}      
{5pt}       
{\itshape}  
{}          
{\bfseries} 
{}         
{ }      
{}          
\newtheoremstyle{algorithmstyle}
{10pt}      
{5pt}       
{}  
{}          
{\bfseries} 
{}         
{ }      
{}          
\newtheoremstyle{examplestyle}
{10pt}      
{5pt}       
{}          
{}          
{\bfseries} 
{}         
{ }      
{}          
\newtheorem*{rep@theorem}{\rep@title}
\newcommand{\newreptheorem}[2]{%
\newenvironment{rep#1}[1]{%
 \def\rep@title{#2 \ref{##1}}%
 \begin{rep@theorem}}%
 {\end{rep@theorem}}}
\newcommand{\subalign}[1]{%
  \vcenter{%
    \Let@ \restore@math@cr \default@tag
    \baselineskip\fontdimen10 \scriptfont\tw@
    \advance\baselineskip\fontdimen12 \scriptfont\tw@
    \lineskip\thr@@\fontdimen8 \scriptfont\thr@@
    \lineskiplimit\lineskip
    \ialign{\hfil$\m@th\scriptstyle##$&$\m@th\scriptstyle{}##$\hfil\crcr
      #1\crcr
    }%
  }%
}
\theoremstyle{theoremstyle}
\newtheorem{theorem}{Theorem}[section]
\newtheorem{lemma}[theorem]{Lemma}
\theoremstyle{examplestyle}
\newtheorem{example}[theorem]{Example}
\newtheorem{definition}[theorem]{Definition}
\newtheorem{experiment}[theorem]{Experiment}
\newtheorem{problem}[theorem]{Problem}
\newtheorem{remark}[theorem]{Remark}
\newtheorem{convention}[theorem]{Convention}
\theoremstyle{algorithmstyle}
\newtheorem{algorithm}[theorem]{Algorithm}
\newcommand{\CC}{\mathbb{C}}
\newcommand{\QQ}{\mathbb{Q}}
\newcommand{\ZZ}{\mathbb{Z}}
\newcommand{\suchthat}{\;\ifnum\currentgrouptype=16 \middle\fi|\;}
\newcommand{\bigslant}[2]{{\raisebox{.2em}{$#1$}\left/\raisebox{-.2em}{$#2$}\right.}}
\newcommand\floor[1]{\lfloor#1\rfloor}
\DeclareMathOperator{\codim}{codim}
\DeclareMathOperator{\Mac}{Mac}
\DeclareMathOperator{\sgn}{sgn}
\DeclareMathOperator{\Supp}{Supp}
\DeclareMathOperator*{\argmin}{arg\,min}
\begin{document}

\title[Embedding polynomial systems into vertically parametrised families]{Embedding polynomial systems into vertically parametrised families: A case study on ODEbase}
\author{Oliver Daisey}
\address{Department of Mathematics, Durham University, United Kingdom.}
\email{oliver.j.daisey@durham.ac.uk}
\urladdr{https://www.durham.ac.uk/staff/oliver-j-daisey/}
\author{Yue Ren}
\address{Department of Mathematics, Durham University, United Kingdom.}
\email{yue.ren2@durham.ac.uk}
\urladdr{https://yueren.de}
\author{Yuvraj Singh}
\address{Department of Mathematics, Durham University, United Kingdom.}
\email{yuvraj.singh@durham.ac.uk}
\urladdr{https://yuvraj-singh-math.github.io}

\maketitle

\begin{abstract}
  Vertically parametrised polynomial systems are a particular nice class of parametrised polynomial systems for which a lot of interesting algebraic information is encoded in its combinatorics.  Given a fixed polynomial system, we empirically study what constitutes a good vertically parametrised polynomial system that gives rise to it and how to construct said vertically parametrised polynomial system.  For data, we use all polynomial systems in \textsc{ODEbase}, which we have transcribed to an OSCAR readable format, and made available as a Julia package \textsc{OscarODEbase}.
\end{abstract}

\section{Introduction}
Vertically parametrised systems are an important class of parametrised polynomial systems.  They describe the steady states in mass-action kinetics, and many of their interesting properties are encoded in their (tropical) combinatorics:
\begin{enumerate}
\item the solution set always has the expected dimension and at least one smooth point \cite{FeliuHenrikssonPascual-Escudero2024},
\item if generically zero-dimensional, both
  \begin{enumerate}
  \item the generic number of complex solutions \cite{HelminckRen2022} and,
  \item a lower bound on the number of positive solutions for a particular instance \cite{RoseTelek2024}
  \end{enumerate}
  can be computed combinatorially using tropical geometry,
\item if generically zero-dimensional, optimal homotopies can be constructed combinatorially using tropical geometry \cite{HelminckHenrikssonRen2024}.
\end{enumerate}

In this article, given a polynomial system $F=\{f_1,\dots,f_k\}\subseteq K[x^\pm]$, we study the task of finding a ``good'' embedding of $F$ into a vertically parametrised system $\dcF$. Specifically, our goal is to find a vertically parametrised system $\dcF$ such that $F=\dcF_P$ for some choice of parameters $P\in K^m$ with $\dcF_P$ denoting the specialisation of $\dcF$ at $P$.  By ``good'', we mean that the system $F$ exhibits many generic properties of $\dcF$. For example, the number of solutions of $F$ should coincide with the generic root count of $\dcF$. This naturally leads to the first problem:

\begin{center}
  \textbf{Problem 1:}  Identify easy criteria for good embeddings.
\end{center}

In \cref{sec:Problem1}, we turn to the polynomial networks in \textsc{ODEbase} \cite{LudersSturmRadulescu2022}.  We consider a random specialisation of each vertically parametrised system and identify which features distinguish the original parametrised system from other embeddings of its specialisation.  We find that
\begin{enumerate}[label=(\alph*),leftmargin=*]
\item the original parametrised system minimises the number of distinct monomials,
\item there is no conclusive tiebreaker among all perturbations minimising the number of distinct monomials.
\end{enumerate}

This then naturally leads to the second problem
\begin{center}
  \textbf{Problem 2:} Construct a good embedding.
\end{center}

In \cref{sec:Problem2}, we show that constructing an optimal embedding is an incredibly challenging task.  Instead, we propose a greedy algorithm and compare its performance to the optimum.

In \cref{sec:ODEbase}, we introduce our OSCAR \cite{Oscar} interface to the \textsc{ODEbase} database and explain how it can be used.

\subsection*{Acknowledgments}
Yue Ren is supported by UK Research and Innovation Future Leaders Fellowship programme ``Computational Tropical Geometry and its Applications'' (MR/S034463/2).
\section{Background}\label{sec:background}
In this section, we go over some basic notions that are of immediate interest.

\subsection{Parametrised polynomial systems}
We begin with introducing some notation for parametrised polynomials.  Our notation will be the same as in \cite{HelminckHenrikssonRen2024}.

\begin{convention}
  \label{con:parametrisedPolynomials}
  For the entirety of the paper, we fix a \emph{parametrised (Laurent) polynomial ring} $K[a][x^\pm]\coloneqq K[a_1,\dots,a_m][x_1^\pm,\dots,x_n^\pm]$ with parameters $a_1,\dots,a_m$ and variables $x_1,\dots,x_n$.  Elements $\dcf\in K[a][x^\pm]$ are referred to as \emph{parametrised polynomials}, and ideals $\dcI\subseteq K[a][x^\pm]$ are referred to as \emph{parametrised polynomial ideals}.  Finite $\dcF=\{\dcf_1,\dots,\dcf_k\}\subseteq K[a][x^\pm]$ are \emph{parametrised polynomial systems}.

  Points $P=(P_1,\dots,P_m)\in K^m$ are called \emph{choices of parameters} and we use subscripts to denote specialisations thereover:
  \begin{align*}
    \dcf_P&\coloneqq \sum_{\alpha\in S} c_\alpha(P)\cdot x^\alpha \in K[x^\pm],\\
    \intertext{provided $\dcf=\sum_{\alpha\in S} c_\alpha\cdot x^\alpha$ with $c_\alpha\in K[a]$ and $S\subseteq \ZZ^n$ finite, and}
    \dcI_P&\coloneqq \langle \dcf_P\mid \dcf\in \dcI\rangle \subseteq K[x^\pm],\\
    \dcF_P&\coloneqq \{\dcf_{1,P},\dots,\dcf_{k,P}\} \subseteq K[x^\pm].
  \end{align*}
\end{convention}

\begin{definition}\
  \label{def:specialisation}
  The \emph{generic specialisation} of a parametrised polynomial $\dcf=\sum_{\alpha\in S}c_\alpha x^\alpha\in K[a][x^\pm]$ is the polynomial over the rational function field $K(a)\coloneqq K(a_1,\dots,a_m)$ given by
  \begin{equation*}
    \dcf_{K(a)} \coloneqq \sum_{\alpha\in S}c_\alpha x^\alpha \in K(a)[x^\pm].
  \end{equation*}
  The \emph{generic specialisation} of a parametrised polynomial ideal $\dcI\subseteq K[a][x^\pm]$ is the polynomial ideal over $K(a)$ given by
  \begin{equation*}
    \dcI_{K(a)}\coloneqq \langle \dcf_{K(a)}\mid \dcf \in \dcI\rangle\subseteq K(a)[x^\pm].
  \end{equation*}
\end{definition}

\begin{definition}
  \label{def:genericProperties}
  Let $\dcI\subseteq K[a][x^\pm]$ be a parametrised polynomial ideal.\\
  The \emph{generic dimension} of $\dcI$ is the Krull dimension of its generic specialisation $\dcI_{K(a)}$.  The \emph{generic root count} of $I$ is $\ell_{\dcI,K(a)}\coloneqq \infty$ if $\dim(\dcI_{K(a)})>0$ or it is the vector space dimension
  \begin{equation*}
    \ell_{\dcI,K(a)} \coloneqq \dim_{K(a)} \bigslant{K[x^\pm]}{\dcI_{K(a)}}.
  \end{equation*}
  The \emph{generic degree} of $\dcI$ is the generic root count of $\dcI+I_L$, where $I_L\subseteq K[x^\pm]$ is a generic linear ideal with $\codim(I_L)=\dim(\dcI_{K(a)})$.
\end{definition}

The generic root count can be regarded as the number of solutions of $\dcI_P$ for a generic choice of parameters $P\in K^m$ and counted with a suitable multiplicity \cite[Corollary 2.5]{CoxLittleOShea05}.  Consequently the generic degree counts the number of intersection points of $V(\dcI_P)\cap L$ for a generic choice of parameters $P\in K^m$ and with a generic linear space $L\subseteq (K^\ast)^n$ of complementary dimension.

\subsection{Vertically parametrised ideals}
Next we introduce vertically parametrised ideals and embeddings of polynomial systems into them.  We recall a central notion of \cite{FeliuHenrikssonPascual-Escudero2024}, which implies that the (generic) dimension is independent of the embedding.

\begin{definition}
  \label{def:verticallyParametrisedSystems}\
  A \emph{vertically parametrised} polynomial system is a system $\dcF=\{\dcf_1,\dots,\dcf_k\}\subseteq K[a][x^\pm]$ of the form
  \begin{equation}
    \label{eq:parametrisedSystem}
    \dcf_i\coloneqq \sum_{j=1}^m c_{i,j}a_j x^{\alpha_j}
  \end{equation}
  for some finite set $S=\{\alpha_1,\dots,\alpha_m\}\subseteq\ZZ^n$ and some $c_{i,j}\in K$.  We generally assume the $\alpha_j$ to be pairwise distinct, $k\leq n\leq m$, and that for all $\alpha_j\in S$ there is an $i\in[k]$ with $c_{i,j}\neq 0$.
  We will refer to the ideal $\dcI\coloneqq \langle\dcF\rangle\subseteq K[a][x^\pm]$ as a \emph{vertically parametrised} ideal.
\end{definition}

\begin{definition}
  \label{def:embedding}
  Let $F=\{f_1,\dots,f_k\}\subseteq K[x^\pm]$ be a polynomial system, say $f_i=\sum_{j=1}^m c_{i,j} x^{\alpha_j}$ for some $S=\{\alpha_1,\dots,\alpha_m\}\subseteq \ZZ^n$, and let $I\coloneqq\langle F\rangle\subseteq K[x^\pm]$ be the ideal it generates.
  The \emph{minimal vertical system} specialising to $F$ is the parametrised system $\dcF$ from \cref{eq:parametrisedSystem}.  We denote its parametrised polynomial ideal by $\dcI_F\coloneqq\langle \dcF\rangle$, it naturally specialises to $I$.  We will refer to $\dcI$ as a \emph{vertical family} containing $I$, as it can be regarded as a set of polynomial ideals of which one is $I$.
\end{definition}

\begin{example}
  \label{ex:embedding}
  Consider the polynomial ideal
  \begin{equation*}
    I\coloneqq \langle \underbrace{x_1^2+x_2^2+x_1 , x_1^2+x_2^2+1}_{\eqqcolon F} \rangle = \langle \underbrace{x_1^2+x_2^2+x_1 , x_1^2x_2+x_2^3+x_2}_{\eqqcolon G} \rangle\subseteq \CC[x_1^\pm, x_2^\pm],
  \end{equation*}
  and let $F,G\subseteq \CC[x^\pm]$ be the two polynomial systems above.  The minimal parametrised systems specialising to $F$ and $G$ are
  \begin{equation*}
    \dcF\coloneqq
    \left\{
      \begin{array}{l}
        a_1x_1^2+a_2x_2^2+a_3x_1\\
        a_1x_1^2+a_2x_2^2+a_4
      \end{array}
    \right\}
    \qquad \text{and} \qquad
    \dcG\coloneqq
    \left\{
      \begin{array}{l}
        b_1x_1^2 + b_2x_2^2+b_3x_1\\
        b_4x_1^2x_2+b_5x_2^3+b_6x_2
      \end{array}
    \right\}.
  \end{equation*}
  They give rise to two distinct vertically parametrised ideals $\dcI_{F}\coloneqq\langle\dcF\rangle\subseteq K[a][x^\pm]$ and $\dcI_{G}\coloneqq\langle\dcG\rangle\subseteq K[b][x^\pm]$ in distinct parametrised polynomial rings $K[a][x^\pm]\coloneqq K[a_1,\dots,a_4][x^\pm]$ and $K[b][x^\pm]\coloneqq K[b_1,\dots,b_6][x^\pm]$.  It is straightforward to see that
  \begin{equation*}
    \ell_I = \ell_{\dcI_F,K(a)} = 2 < 4 = \ell_{\dcI_G,K(b)}.
  \end{equation*}
  The reason why the generic root count of $\dcI_F$ is lower than that of $\dcI_G$ can be attributed to cancellations in the higher degree monomials of $\dcF$.
\end{example}

We close this subsection with two results from existing literature on minimal vertical systems. \cref{lem:dimension} shows that the generic dimension of the minimal vertical system is independent on the choice of system.  \cref{lem:degree} shows that the generic degree of the minimal vertical system is increasing in the monomial support.

\begin{lemma}
  \label{lem:dimension}
  Let $I\subseteq K[x^\pm]$ be a complete intersection of codimension $r$, and let $F=\{f_1,\dots,f_r\}$ be any generating set of $I$.  Let $\dcI_F\subseteq K[a][x^\pm]$ be the vertically parametrised ideal induced by $F$, and let $\dcI_{F,K(a)}$ denote the generic specialisation.  Then $\dim(\dcI_{F,K(a)})=n-r$.
\end{lemma}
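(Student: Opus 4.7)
The plan is to sandwich $\dim(\dcI_{F,K(a)})$ between $n-r$ from both sides.

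For the lower bound, note that $\dcI_{F,K(a)}$ is generated by $r$ elements in the Noetherian, Cohen--Macaulay ring $K(a)[x^\pm]$ of Krull dimension $n$. Krull's height theorem then forces every minimal prime of $\dcI_{F,K(a)}$ to have codimension at most $r$, whence $\dim(\dcI_{F,K(a)}) \geq n - r$. The same Krull argument applies verbatim to every closed-point specialisation $\dcI_{F,P}$ as well, which will be useful later.

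For the upper bound, consider the projection $\pi \colon V(\dcI_F) \to K^m$ onto the parameter space, whose fiber over a closed point $P$ is $V(\dcF_P) \subseteq (K^*)^n$ and whose scheme-theoretic generic fiber corresponds to $V(\dcI_{F,K(a)})$. By upper semicontinuity of fiber dimension over the irreducible base $K^m$, $\dim V(\dcF_P) \geq \dim(\dcI_{F,K(a)})$ for every $P$ in the image of $\pi$. Pick the specific $P_0 \in K^m$ satisfying $\dcF_{P_0} = F$, which lies in the image since $V(I) \neq \emptyset$ by the complete intersection hypothesis. Then
\begin{equation*}
  \dim(\dcI_{F,K(a)}) \leq \dim V(\dcF_{P_0}) = \dim V(I) = n - r,
\end{equation*}
and combining the two inequalities yields the claim.

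The main subtlety is the invocation of upper semicontinuity of fiber dimension, in particular the identification of the scheme-theoretic generic fiber with the $K(a)$-extension, and the fact that even if $V(\dcI_F)$ is reducible each of its irreducible components dominates $K^m$ compatibly so that the inequality persists. A more algebraic alternative would be to show directly that $\dcf_{1,K(a)}, \ldots, \dcf_{r,K(a)}$ form a regular sequence in $K(a)[x^\pm]$ by deforming from the regular sequence $f_1, \ldots, f_r$ in $K[x^\pm]$, but the geometric route via semicontinuity seems conceptually simplest.
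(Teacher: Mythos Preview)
Your sandwiching strategy is different from the paper's proof, which simply cites two results of Feliu--Henriksson--Pascual-Escudero: the incidence variety $\{(P,x)\in K^m\times(K^*)^n : x\in V(\dcI_{F,P})\}$ is irreducible of dimension $m+n-r$, and this forces the generic fibre over $K^m$ to have dimension $n-r$. No Krull bound, no special point $P_0$, and no semicontinuity are invoked.

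The subtlety you flag in the upper bound is a genuine gap rather than a technicality. The assertion ``$\dim V(\dcF_P)\geq\dim(\dcI_{F,K(a)})$ for every $P$ in the image of $\pi$'' is false for arbitrary closed subschemes over an irreducible base: for instance $X=V(ax_1-1)\cup V(a,x_1-1,x_2-1)\subseteq K\times(K^*)^2$ has generic fibre of dimension $1$ but fibre of dimension $0$ over $a=0$, which lies in the image. What rescues the argument here is precisely the irreducibility of $V(\dcI_F)$, i.e.\ the Feliu et al.\ input the paper cites. You can recover it yourself by looking at the \emph{other} projection $V(\dcI_F)\to(K^*)^n$: the fibre over any $x$ is the kernel of the linear map $a\mapsto\big(\sum_j c_{i,j}x^{\alpha_j}a_j\big)_i$, whose matrix has the same rank $r$ as $(c_{i,j})$ since each $x^{\alpha_j}$ is a unit and the $f_i$ are $K$-linearly independent (being a regular sequence). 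Hence every fibre is an $(m-r)$-dimensional linear space, so $V(\dcI_F)$ is an affine-space bundle over $(K^*)^n$ and therefore irreducible of dimension $m+n-r$. Once you have this, the generic-fibre-dimension theorem gives $\dim(\dcI_{F,K(a)})=n-r$ directly and the special point $P_0$ becomes unnecessary; the same irreducibility also supplies the properness of $\dcI_{F,K(a)}$ that your Krull lower bound tacitly assumes.
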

\begin{proof}
  By \cite[Theorem 3.1]{feliu2024genericconsistencynondegeneracyvertically}, the incidence variety
  \begin{equation*}
    \Big\{(P,x) \in K^m \times (K^{*})^n \mid x \in V(\dcI_{F,P})\Big\}
  \end{equation*}
  is irreducible of dimension $m+n-r$. Thus by \cite[Theorem 2.2]{feliu2024genericconsistencynondegeneracyvertically}, the generic specialisation $\dcI_{F, K(a)}$ has dimension $n-r$.
\end{proof}

\begin{lemma}
  \label{lem:degree}
  Let $I\subseteq K[x^\pm]$ be a complete intersection of codimension $r$, and let $F=\{f_1,\dots,f_r\}, G=\{g_1,\dots,g_r\}$ be two generating sets of $I$.  Let $\dcI_F, \dcI_G\subseteq K[a][x^\pm]$ be the vertically parametrised ideals induced by $F$ and $G$, respectively.  Suppose we have $\Supp(f_i)\subseteq\Supp(g_i)$ for all $i\in[r]$.  Then $\ell_{\dcI_F,K(a)}\leq \ell_{\dcI_G,K(a)}$.
\end{lemma}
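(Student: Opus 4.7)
The plan is to split into two cases based on the codimension. If $r < n$, then by \cref{lem:dimension} both $\dcI_{F,K(a)}$ and $\dcI_{G,K(a)}$ have Krull dimension $n-r > 0$. Hence both generic root counts equal $\infty$ and the inequality holds vacuously.

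Assume henceforth $r = n$, so both counts are finite. A crucial preliminary observation is that in the Laurent polynomial setting, the function $P \mapsto \dim_K K[x^\pm]/\dcI_P$ is \emph{lower} semi-continuous in $P$: torus solutions may escape to the coordinate hyperplanes under specialisation but never appear from nothing. Hence the generic value is a maximum, and restricting to a closed subvariety of parameter space can only decrease the generic root count.

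Using this, I would construct an auxiliary vertically parametrised system $\dcu$ with support $S_G := \bigcup_i \Supp(g_i)$ whose coefficient matrix equals $c^F$ on the entries indexed by $S_F := \bigcup_i \Supp(f_i)$ and takes arbitrary non-zero values on the remaining entries. The hypothesis $\Supp(f_i) \subseteq \Supp(g_i)$ ensures this is a valid vertical system. Setting the parameters of $\dcu$ indexed by $S_G \setminus S_F$ to zero recovers $\dcF$ verbatim, so lower semi-continuity yields $\ell_{\dcI_F,K(a)} \leq \ell_{\langle\dcu\rangle, K(a)}$.

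The main obstacle is completing the chain via the inequality $\ell_{\langle\dcu\rangle, K(a)} \leq \ell_{\dcI_G,K(a)}$, that is, comparing generic root counts of two vertical systems ($\dcu$ and $\dcG$) sharing the support $S_G$ but differing in coefficient matrices. Since the generic root count can depend on more than just the support pattern (cf.~the matroid invariants in \cite{HelminckRen2022}), this step is non-trivial. I would invoke the combinatorial description of the generic root count for vertical systems from \cite{HelminckRen2022}, where the desired monotonicity in support (holding the coefficient matrix in non-degenerate position) becomes transparent once one unpacks the definitions.
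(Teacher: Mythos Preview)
The paper's own proof is a one-line citation: ``Follows from \cite[Lemma~5.2]{HelminckRen2022}.'' So the comparison is between a direct invocation of that external lemma and your two-stage reduction.

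Your case split for $r<n$ is correct and harmless. The problem lies in your route for $r=n$. The auxiliary system $\dcu$ shares the total support $S_G$ with $\dcG$, but its coefficient matrix agrees with $c^F$ on the columns indexed by $S_F$, whereas $\dcG$ carries $c^G$ there; these two matrices need have nothing in common beyond their zero patterns. As you yourself note, the generic root count of a vertical system depends on the matroid of the coefficient matrix, not merely on which entries are nonzero. Consequently the inequality $\ell_{\langle\dcu\rangle,K(a)}\le\ell_{\dcI_G,K(a)}$ is \emph{not} an instance of ``monotonicity in support''---the supports are equal---so your appeal to \cite{HelminckRen2022} for this step is misdirected. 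You have traded a support-inclusion comparison (exactly what \cite[Lemma~5.2]{HelminckRen2022} addresses, and what the paper invokes) for an equal-support, different-matroid comparison, which that lemma does not cover.

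The semi-continuity step $\ell_{\dcI_F,K(a)}\le\ell_{\langle\dcu\rangle,K(a)}$ is also more delicate than your heuristic suggests: specialising parameters to a closed linear subspace can land you entirely inside the non-generic locus of the larger family, so ``solutions cannot appear from nothing'' needs the irreducibility of the incidence variety (as used in \cref{lem:dimension}) to become a rigorous bound. In short, the detour through $\dcu$ introduces two non-trivial obligations where the paper has none, and the second of them is a genuine gap rather than a routine unpacking.
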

\begin{proof}
  Follows from \cite[Lemma 5.2]{HelminckRen2022}.
\end{proof}
\subsection{Macaulay matrices}
One way to formalise the cancellations in \cref{ex:embedding} is using Macaulay matrices and their minors.

\begin{definition}
  \label{def:macaulayMatrix}
  Let $F=\{f_1,\dots,f_k\}\subseteq K[x^\pm]$ be a polynomial system, say $f_i=\sum_{j=1}^m c_{i,j} x^{\alpha_j}$ for some $S=\{\alpha_1,\dots,\alpha_m\}\subseteq \ZZ^n$.
  The \emph{Macaulay matrix} of $F$ is the matrix $\Mac(F)\coloneqq (c_{i,j})_{i\in[k],j\in[m]}\in K^{k\times m}$. A minor of the Macaulay-matrix $\det((c_{i,j})_{i\in[k],j\in J})$, $J\in\binom{[m]}{k}$, is \emph{non-trivial} if we can write $J\eqqcolon\{j_1,\dots,j_k\}$ with $c_{i,j_i}\neq 0$, and \emph{trivial} otherwise.
\end{definition}

The intuition behind a non-trivial minor of a Macaulay matrix is that every polynomial contributes to it.  Indeed, one can easily show that trivial minors have to vanish:
\begin{lemma}
  \label{lem:trivialMinors}
  Let $\det((c_{i,j})_{i\in[k],j\in J})$ be a trivial minor of the Macaulay matrix $\Mac(F)$.  Then $\det((c_{i,j})_{i\in[k],j\in J})=0$.
\end{lemma}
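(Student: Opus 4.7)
The plan is to apply the Leibniz formula for the determinant directly. Fix any enumeration $J = \{j_1, \dots, j_k\}$ so that
\[
\det\bigl((c_{i,j})_{i\in[k],\,j\in J}\bigr) \;=\; \sum_{\sigma \in S_k} \sgn(\sigma) \prod_{i=1}^{k} c_{i,\,j_{\sigma(i)}}.
\]
Each permutation $\sigma \in S_k$ determines a bijection $\pi_\sigma \colon [k] \to J$ via $\pi_\sigma(i) \coloneqq j_{\sigma(i)}$, and conversely every bijection $[k] \to J$ arises in this way (once the enumeration of $J$ is fixed). Thus each term of the Leibniz sum corresponds exactly to a way of writing $J$ as $\{j'_1, \dots, j'_k\}$ with $j'_i = \pi_\sigma(i)$.

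Now invoke the trivial hypothesis: by negation of the non-triviality condition in \cref{def:macaulayMatrix}, for \emph{every} enumeration $J = \{j'_1, \dots, j'_k\}$ there exists some index $i$ with $c_{i, j'_i} = 0$. Applied to the enumeration $\pi_\sigma$ associated to a given $\sigma \in S_k$, this shows that at least one factor in $\prod_{i=1}^{k} c_{i, j_{\sigma(i)}}$ is zero, so the entire product vanishes. Since this argument is uniform in $\sigma$, every term of the Leibniz expansion is zero, and therefore $\det\bigl((c_{i,j})_{i\in[k],\,j\in J}\bigr) = 0$.

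There is essentially no obstacle here: the result is a direct reformulation of the Leibniz formula once one observes that the combinatorial condition of being ``trivial'' is precisely the statement that no permutation produces a non-vanishing diagonal contribution. The only thing worth double-checking is the bookkeeping that bijections $[k] \to J$ are in one-to-one correspondence with elements of $S_k$ after a choice of ordering of $J$, which is immediate.
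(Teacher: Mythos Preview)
Your proof is correct and follows essentially the same approach as the paper: both use the Leibniz expansion and identify each summand with an enumeration of $J$. The only cosmetic difference is that the paper argues by contrapositive (if the determinant is nonzero, some summand is nonzero, so the minor is non-trivial), whereas you argue directly that every summand vanishes.
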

\begin{proof}
  Suppose $J=\{j_1,\dots,j_k\}$. By the Leibniz formula for determinants, if
  \begin{equation*}
    \det((c_{i,j})_{i\in[k],j\in J}) = \sum_{\sigma\in S_k}\sgn\left(\sigma\right)\prod_{i=1}^{k}c_{i,j_{\sigma(i)}} \neq 0,
  \end{equation*}
  then at least one of the summands has to be non-zero, showing that the minor was non-trivial.
\end{proof}

\section{Finding criteria for good embeddings}\label{sec:Problem1}
In this section, given a polynomial ideal $I\subseteq K[x^\pm]$, we discuss what distinguishes a good vertical family $\dcI$ containing $I$ from a bad vertical family containing $I$.  Ideally, the generic specialisation  $\dcI_{K(a)}$ should have a lot of properties in common with $I$.

Two of the most fundamental properties of polynomial ideals (or their affine varieties) are their dimension and degree. By \cref{lem:dimension}, the dimension is the same for all vertical families, whereas, by \cref{ex:embedding}, the degree is not.

\subsection{Experiment setup}
Our discussion will be based on experiments in which we will focus on the following features and scores of Macaulay matrices:

\begin{definition}
  \label{def:featuresAndScores}
  Let $F\subseteq K[x^\pm]$ be a polynomial system.  We denote
  \setlist[description]{font=\normalfont\hspace{5mm}}
  \begin{description}[itemsep=1mm]
  \item[$M(F)$] the number of minors of $\Mac(F)$ ($=\binom{m}{k}$).
  \item[$M_{0}(F)$] the number of zero minors of $\Mac(F)$.
  \item[$M^{\mathrm{nt}}(F)$] the number of non-trivial minors of $\Mac(F)$.
  \item[$M_{0}^{\mathrm{nt}}(F)$] the number of non-trivial zero minors of $\Mac(F)$.
  \end{description}
  and consider the following \emph{scores}:
  \begin{equation*}
    \setlength{\arraycolsep}{2pt}
    \begin{array}{llllllll}
      S(F)&\coloneqq -M(F), & \quad\quad\quad & S_0(F)&\coloneqq M_0(F), & \quad\quad\quad & R_0(F)&\coloneqq \displaystyle\frac{M_0(F)}{M(F)}, \\
          & & & S_0^{\mathrm{nt}}(F) &\coloneqq M_0^{\mathrm{nt}}(F), & & R_0^{\mathrm{nt}}(F) &\coloneqq \displaystyle\frac{M_0^{\mathrm{nt}}(F)}{M^{\mathrm{nt}}(F)}.
    \end{array}
  \end{equation*}
  For $R_0(F)$ and $R_0^{\mathrm{nt}}(F)$, note that we always have $M(F)\neq 0\neq M^{\mathrm{nt}}(F)$.
\end{definition}

\begin{example}
  \label{ex:embeddingScores}
  Consider $F$ and $G$ from \cref{ex:embedding}.  Their Macaulay matrices are
  \begin{equation*}
    \Mac(F)=
    \begin{pmatrix}
      1 & 1 & 1 & 0 \\
      1 & 1 & 0 & 1
    \end{pmatrix}
    \qquad \text{and} \qquad
    \Mac(G)=
    \begin{pmatrix}
      1 & 1 & 1 & 0 & 0 & 0 \\
      0 & 0 & 0 & 1 & 1 & 1
    \end{pmatrix}.
  \end{equation*}
  The scores of $F$ and $G$ are:
  \begin{equation*}
    \begin{array}{c|ccccc}
        & S & S_0 & S_0^{\mathrm{nt}} & R_0 & R_0^{\mathrm{nt}} \\[2mm] \hline
      F & -\binom{4}{2} & 1 & 1 & \frac{1}{6} & 1 \\[2mm]
      G & -\binom{6}{2} & 6 & 0 & \frac{6}{15} & 0
    \end{array}
  \end{equation*}
\end{example}

\begin{remark}
  \label{rem:scores}
  The intuition behind the scores in \cref{def:featuresAndScores} is as follows:

  Maximizing $S(\cdot)$ minimizes the monomial support of the system.  This can be beneficial for minimizing the degree, as seen in \cref{lem:degree}.

  Maximizing the remaining scores maximizes vanishing of minors in some sense. This is motivated by \cref{ex:embeddingScores}, as well as the results in \cite[Section 6.1]{HelminckRen2022}, which show that the generic root count of a vertically parametrised ideal can be expressed as a tropical intersection product of a tropical linear space and a tropicalized binomial variety.  Said intersection product depends on how ``simple'' the tropical linear space is, see \cref{fig:tropicalIntersectionDifferentLinearSpaces}, which is related to how many entries of its Pl\"ucker vector are zero.
\end{remark}

\begin{figure}[t]
  \centering
  \begin{tikzpicture}
    \node (left) at (-4,0)
    {
      \begin{tikzpicture}[scale=0.7]
        \draw[very thick,blue!50!black] (-2,-1) -- (2,1);
        \draw[->,very thick,blue!50!black] (0,0) -- (1,0.5);
        \node[anchor=south east,font=\footnotesize,blue!50!black] at (1,0.5) {$(2,1)$};

        \draw[very thick,red!50!black] (-2,0) -- (2,0);
        \draw[->,very thick,red!50!black] (0,0) -- (1,0);
        \node[anchor=north,font=\footnotesize,red!50!black,yshift=-1mm] at (1,0) {$(0,1)$};
        \fill[white,draw=black] (0,0) circle (3pt);
        \node[anchor=north,yshift=-1mm] at (0,0) {1};
      \end{tikzpicture}
    };

    \node (mid) at (0,0)
    {
      \begin{tikzpicture}[scale=0.7]
        \draw[very thick,blue!50!black] (-2,-1) -- (2,1);
        \draw[->,very thick,blue!50!black] (0,0) -- (1,0.5);
        \node[anchor=north west,font=\footnotesize,blue!50!black] at (1,0.5) {$(2,1)$};

        \draw[very thick,red!50!black] (0,-2) -- (0,2);
        \draw[->,very thick,red!50!black] (0,0) -- (0,1);
        \node[anchor=east,font=\footnotesize,red!50!black,xshift=-1mm] at (0,1) {$(0,1)$};
        \fill[white,draw=black] (0,0) circle (3pt);
        \node[anchor=north west] at (0,0) {2};
      \end{tikzpicture}
    };

    \node (right) at (4,0)
    {
      \begin{tikzpicture}[scale=0.7]
        \draw[very thick,blue!50!black] (-2,-1) -- (2,1);

        \draw[very thick,red!50!black]
        (-2,-0.5) -- (0,-0.5)
        (0,-0.5) -- (0,-2)
        (0,-0.5) -- (2,1.5);
        \fill[white,draw=black] (-1,-0.5) circle (3pt);
        \node[anchor=north] at (-1,-0.5) {1};
        \fill[white,draw=black] (1,0.5) circle (3pt);
        \fill[white,draw=black] (1,0.5) circle (3pt);
        \node[anchor=north] at (1,0.5) {1};
      \end{tikzpicture}
    };
  \end{tikzpicture}
  \caption{The tropical intersection products of one tropicalized binomial ideal (blue) and its intersection product with three different tropical lines (red).}
  \label{fig:tropicalIntersectionDifferentLinearSpaces}
\end{figure}
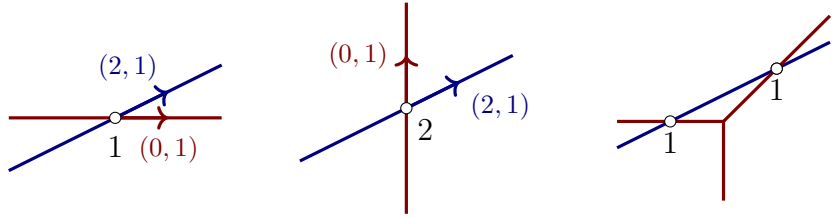

In the experiments, we will be comparing generic specialisations of parametrised polynomial systems to their perturbations in the following sense:

\begin{definition}
  \label{def:perturbation}
  Suppose $F=\{f_1,\dots,f_k\}$.  A \emph{perturbation} of $F$ is a system of the form
  \begin{equation}
    \label{eq:perturbation}
    F'\coloneqq \big\{ f_1,\dots,f_{i-1},x_j^s \cdot f_i,f_{i+1},\dots,f_n \big\} \quad \text{for some } j\in[k], i\in[n], s\in\{0,\pm1\}.
  \end{equation}
\end{definition}

\subsection{Experimental results}
\label{exp:features}
The goal of this experiment is to identify which of the scores described in \cref{def:featuresAndScores} is best at distinguishing the systems of \textsc{ODEbase} from their non-trivial perturbations.

There are $51$ systems arising from mass-action kinetics in \textsc{ODEbase} that have toric solutions (i.e. \(f_i\) is non-monomial for all \(f_i\in F\)) of which $31$ have $16$ species or less.  These are the systems we consider.

For each system $F$, we test whether any perturbation $F'$ as in \cref{eq:perturbation} has a higher score than the original system, i.e., we test how successful each score is at identifying the original system amongst its perturbations.  The result is:
\begin{equation*}
  S\colon 28 \qquad S_0\colon 2 \qquad S_0^\mathrm{nt}\colon 9 \qquad R_0\colon 9 \qquad R_0^\mathrm{nt}\colon 2
\end{equation*}
We see that score $S$ performs best out of all scores in \cref{def:featuresAndScores}, i.e., that the original system tends to have the fewest number of unique monomials.

Note however that original system is not always the best with respect to $S$:
According to \textsc{ODEbase}, the system \texttt{BIOMD0000000629} has the following reaction network ($s_1,\dots,s_5$ denoting the species and $\kappa_2,\dots,\kappa_5$ denoting the reaction rates):
\begin{equation*}
  s_1+s_3 \underset{\kappa_3}{\overset{\kappa_2}{\rightleftharpoons}} s_2 \qquad s_4+s_2\underset{\kappa_5}{\overset{\kappa_4}{\rightleftharpoons}} s_5
\end{equation*}
leading to the following ODE polynomials:
\begin{align*}
  f_1\coloneqq \dot x_1 &= -\kappa_2 x_1x_3 + \kappa_3x_2, &
  f_4\coloneqq \dot x_4 &= -\kappa_4 x_2x_4 + \kappa_5x_5,\\
  f_2\coloneqq \dot x_2 &= \kappa_2 x_1x_3 - \kappa_4x_2x_4 - \kappa_3x_2 +\kappa_5x_5, &
  f_5\coloneqq \dot x_5 &= \kappa_4 x_2x_4 - \kappa_5x_5. \\
  f_3\coloneqq \dot x_3 &= -\kappa_2 x_1x_3 + \kappa_3x_2,
\end{align*}
and the following three constraints:
\begin{align*}
  \kappa_1x_1 + \kappa_1x_2 + \kappa_1x_5 - \kappa_1\kappa_6&=0, &
  \kappa_1x_2 + \kappa_1x_3 + \kappa_1x_5 - \kappa_1\kappa_7&=0, \\
  \kappa_1x_4 + \kappa_1x_5 - \kappa_1\kappa_8&=0.
\end{align*}
According to the constraints under consideration, $x_1,x_2,x_4$ are uniquely determined by $x_3,x_5$, hence only $f_3,f_5$ are necessary. But evidently the system \(\left\{ f_3,f_5 \right\}\) has more unique monomials than \(\left\{ x_4\cdot f_3,f_5 \right\}\), showing that the original system is not always the best with respect to $S$.

Moreover, observe that in the cases where the original system attains the best score in $S$, it need not be the unique maximum among its perturbations:  After taking the constraints into consideration, the system \texttt{BIOMD0000000405} has the following necessary ODE polynomials:
\begin{align*}
  f_2\coloneqq \dot x_2&=-\kappa_4x_2x_5 - \kappa_5x_2 + \kappa_2, &
  f_3\coloneqq \dot x_3&= \kappa_4x_1x_5 - \kappa_3x_3, \\
  f_4\coloneqq \dot x_4&=\kappa_4x_2x_5 - \kappa_3x_4, &
  f_5\coloneqq \dot x_5&=-\kappa_4x_1x_5 - \kappa_4x_2x_5 + \kappa_3x_3 + \kappa_3x_4.
\end{align*}
One can observe that multiplying \(f_2\) by \(x_3\), \(x_4\) or \(x_5\) preserves the number of unique monomials, hence the score \(S\) is the same for these perturbations.

Experiments aimed at identifying a score that can act as a tiebreaker for $S$ were inconclusive. There are 9 models which are not strict maximums. Out of these, 5 models, including \texttt{BIOMD0000000405}, have the original system lose on every other score to another perturbation. Thus there is no clear way of identifying the original system out of other systems with the same score.

\section{Finding good embeddings}\label{sec:Problem2}

In \cref{exp:features}, we identified that one of the primary features of a good embedding is that it minimizes the number of distinct monomials.  This means that finding an optimal embedding entails finding an optimal solution to the following problem:

\begin{problem}
  \label{prob:alignmentProblem}
  Given finite $S_1,\dots,S_k\subseteq\ZZ^n$, find translates $S_1+v_1,\dots,S_k+v_k\subseteq\ZZ^n$ that minimize $|\bigcup_{i=1}^k (S_i+ v_i)|$.
\end{problem}

Unfortunately, \cref{prob:alignmentProblem} is a well-known challenging problem without an efficient solution:

\begin{remark}
  \label{rem:alignmentProblem}
  The problem of optimally aligning two convex polytopes in 2D and 3D with $n$ total vertices by translations has been studied by de Berg et al. \cite{de1996computing} and Ahn et al. \cite{ahn2008maximum}, providing algorithms that run in expected $O(n\log n)$ and $O(n^3\log^4 n)$ time respectively. Allowing the ambient dimension $d$ to vary, Ahn et al. provide a probabilistic algorithm that finds a maximal overlap in $O(m^{\floor{d/2}+1}\log ^d m)$, where $m$ is the number of defining hyperplanes for the two polytopes \cite{ahn2013maximum}. In the 3D case, this specialises to $O(m^3 \log^{3.5} m)$.

  The difficulty of our problem becomes clear when we allow both the number of dimensions $d$ and number of polytopes to grow, and here it is difficult to provide complexity estimates. Computation of $d$-volume is already \#P-hard, and to our knowledge the best algorithm for our problem in the literature is an oracle polynomial time ellipsoid method \cite{fukuda2007polynomial}. One can hope for better bounds if one only asks for a $(1-\varepsilon)$-approximation of the optimal overlap for some $\varepsilon>0$ sufficiently small, in which case Ahn et al. \cite{ahn2005maximizing} offer an algorithm for two convex polytopes in the plane that runs in $O(1/\varepsilon \log(n/\varepsilon))$ time.
\end{remark}

Instead of solving \cref{prob:alignmentProblem}, we therefore propose the following greedy algorithm:

\begin{algorithm}[\texttt{GreedyAlignment}]\label{alg:greedyAlignment}\
  \begin{algorithmic}[1]
    \REQUIRE{$(S_1,\dots,S_k)$, where $S_i\subseteq \ZZ^n$ finite.}
    \ENSURE{$(S_1,S_2+v_2,\dots,S_k+v_k)$ such that each $v_\ell\in\ZZ^n$ minimises $|\bigcup_{i=1}^\ell (S_{i}+v_{i})|$ with $v_1\coloneq 0$.}
    \STATE Compute
    \begin{equation*}
      v_2\coloneqq \argmin_{v\in S_1-S_2}\Big(|S_1\cup (S_2+v)|\Big)
    \end{equation*}
    where $S_1-S_2\coloneqq \{ \alpha_1-\alpha_2\mid \alpha_1\in S_1, \alpha_2\in S_2\}$.
    \RETURN{$(0,\alpha_2,\texttt{GreedyAlignment}(S_1\cup(S_2+v_2),S_3,\dots,S_k))$}
  \end{algorithmic}
\end{algorithm}

Note that \cref{alg:greedyAlignment} is not guaranteed to find the optimal solution:

\begin{example}
  \label{ex:greedyNotOptimal}
  For the sake of simplicity, we will consider the overdetermined case of three polynomials in two variables.  Consider the system of polynomials
  \begin{equation*}
    f_1\coloneqq 1+x+xy,\quad f_2\coloneqq y+x+xy,\quad f_3\coloneqq x+y+x^2y+y^2x,
  \end{equation*}
  with a score given by counting the number of overlapping points. \cref{fig:greedyNotOptimal} shows the supports of the polynomial above as point sets in $\ZZ^2$, as well as their optimal alignment and a non-optimal alignment obtained by greedily aligning $S_1$ and $S_2$ first.
\end{example}

\begin{figure}
  \centering
  \begin{tikzpicture}
    \node[anchor=south east,xshift=-2mm,yshift=1mm] (set1) at (0,0)
    {
      \begin{tikzpicture}[scale=0.7]
        \draw (-0.5,-0.5) rectangle (1.5,1.5);
        \fill
        (0,0) circle (3pt)
        (0,1) circle (3pt)
        (1,1) circle (3pt);
      \end{tikzpicture}
    };
    \node[anchor=south] at (set1.north) {$S_1$};
    \node[anchor=south west,xshift=2mm,yshift=1mm] (set2) at (0,0)
    {
      \begin{tikzpicture}[scale=0.7]
        \draw (-0.5,-0.5) rectangle (1.5,1.5);
        \fill
        (0,0) circle (3pt)
        (1,0) circle (3pt)
        (1,1) circle (3pt);
      \end{tikzpicture}
    };
    \node[anchor=south] at (set2.north) {$S_2$};
    \node[anchor=north,yshift=-1mm] (set3) at (0,0)
    {
      \begin{tikzpicture}[scale=0.7]
        \draw (-0.5,-0.5) rectangle (2.5,2.5);
        \fill
        (1,0) circle (3pt)
        (0,1) circle (3pt)
        (1,2) circle (3pt)
        (2,1) circle (3pt);
      \end{tikzpicture}
    };
    \node[anchor=north] at (set3.south) {$S_3$};
    \node[anchor=south,yshift=2mm] (alignment1) at (5,0)
    {
      \begin{tikzpicture}[scale=1]
        \draw (-0.5,-0.5) rectangle (2.5,2.5);
        \fill
        (1,0) circle (2pt)
        (0,1) circle (2pt)
        (1,1) circle (2pt)
        (1,2) circle (2pt)
        (2,1) circle (2pt);
        \node[anchor=north,font=\scriptsize] at (1,0) {$1,3$};
        \node[anchor=north,font=\scriptsize] at (0,1) {$2,3$};
        \node[anchor=north,font=\scriptsize] at (1,1) {$1,2$};
        \node[anchor=north,font=\scriptsize] at (2,1) {$1,3$};
        \node[anchor=north,font=\scriptsize] at (1,2) {$2,3$};
      \end{tikzpicture}
    };
    \node[anchor=south] at (alignment1.north) { optimal alignment };
    \node[anchor=north,yshift=-2mm] (alignment2) at (5,0)
    {
      \begin{tikzpicture}[scale=1]
        \draw (-0.5,-0.5) rectangle (2.5,2.5);
        \fill
        (0,0) circle (2pt)
        (1,0) circle (2pt)
        (0,1) circle (2pt)
        (1,1) circle (2pt)
        (1,2) circle (2pt)
        (2,1) circle (2pt);
        \node[anchor=north,font=\scriptsize] at (0,0) {$2$};
        \node[anchor=north,font=\scriptsize] at (1,0) {$1,2,3$};
        \node[anchor=north,font=\scriptsize] at (0,1) {$3$};
        \node[anchor=north,font=\scriptsize] at (1,1) {$1,2$};
        \node[anchor=north,font=\scriptsize] at (2,1) {$1,3$};
        \node[anchor=north,font=\scriptsize] at (1,2) {$3$};
      \end{tikzpicture}
    };
    \node[anchor=north] at (alignment2.south) { greedy alignment };
  \end{tikzpicture}
  \caption{The optimal alignment of $S_1, S_2, S_3$, and an unoptimal alignment obtained by greedily aligning $S_1$ and $S_2$ first.}
  \label{fig:greedyNotOptimal}
\end{figure}
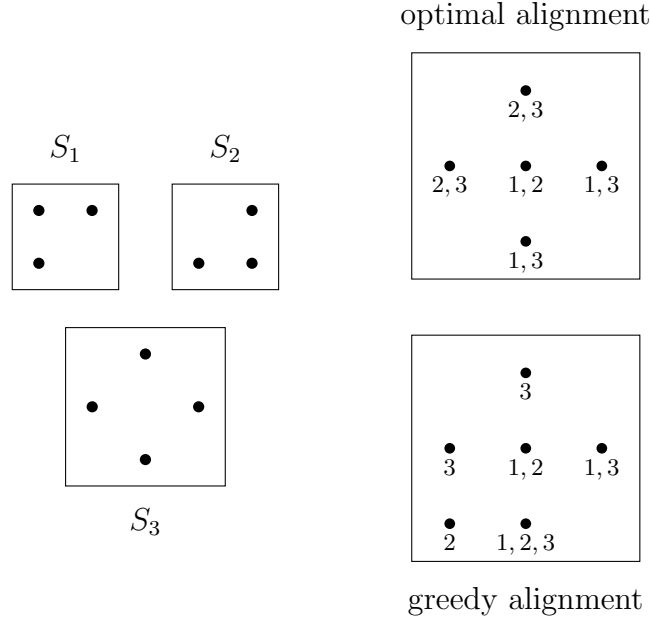

However, \cref{alg:greedyAlignment} performs surprisingly well on systems in practice:

\begin{experiment}
  \label{ex:greedyAlignment}
  We consider all 70 systems in \textsc{ODEbase} arising from mass-action kinetics that do not have parameters in the exponent.  For each system $F=\{f_1,\dots,f_k\}$, we run \cref{alg:greedyAlignment} on the monomial supports of $10$ random translations
  \begin{equation*}
    x^{\alpha_1}\cdot f_1,\quad \cdots, \quad x^{\alpha_k}\cdot f_k \qquad \text{for } \alpha_i\in [0,2^8-1]^n \text{ random},
  \end{equation*}
  and compare how the greedily aligned system compares to the original system in terms of the number of monomials.  The data can be found in
  \begin{center}
    \url{https://github.com/yuvraj-singh-math/monomial-translation}
  \end{center}
  and is illustrated in \cref{fig:greedyAlignmentExperiment}.  We see that in $91\%$ of the cases, the average score is within $1.149$ of the optimal whereas the best-of-ten score is within $1.059$ of the optimal score. We therefore recommend running \cref{alg:greedyAlignment} multiple times on different random translations.

  We note that \texttt{BIOMD0000000205}, the 70th system in our filtered list, proved to be too memory intensive in the current program. We therefore treated this system differently in our experiments.
\end{experiment}

\begin{figure}[t]
  \centering
  \input{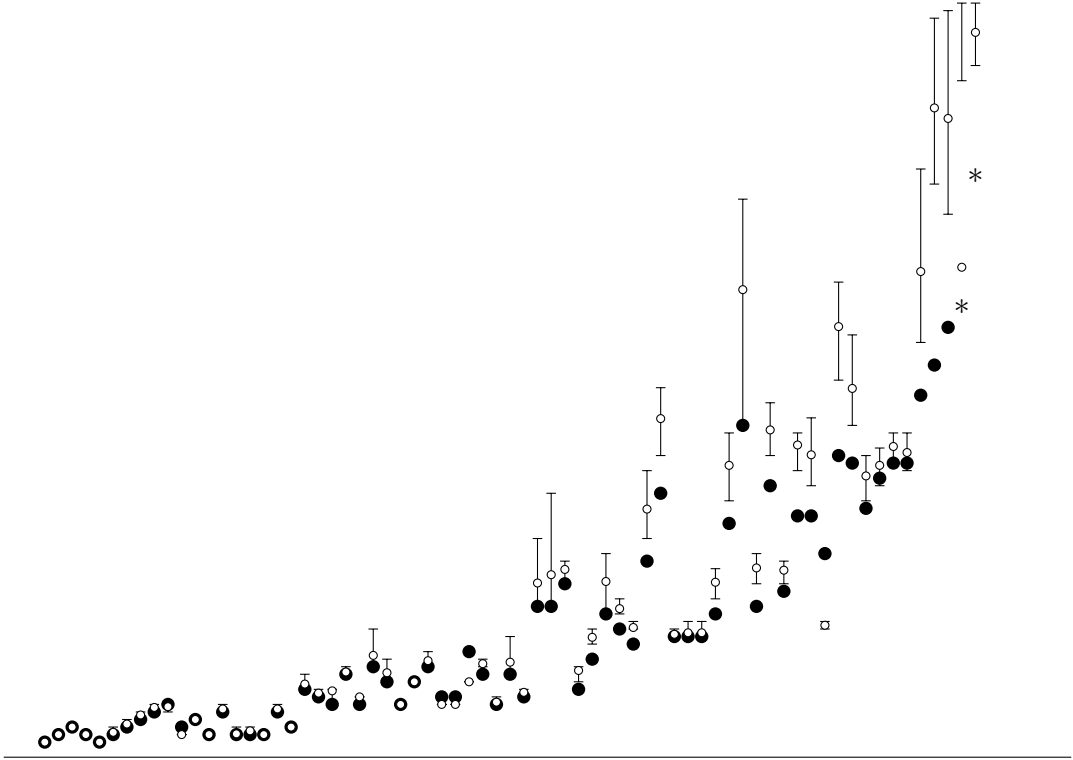}
\caption{Results of \cref{alg:greedyAlignment} in \cref{ex:greedyAlignment}. Black points are the original and optimal score, white points are average of 10 runs, and the bars indicate the spread of scores. Black points replaced by asteriks indicate systems whose scores have been scaled by a constant factor to fit the figure.}\label{fig:greedyAlignmentExperiment}
\end{figure}

\section{OscarODEbase.jl}\label{sec:ODEbase}
In order to facilitate our experiments, we have created a small Julia package \textsc{OscarODEbase.jl}, which as of the date of writing contains 190 out of 200 polynomial models in \textsc{ODEbase} as OSCAR polynomials.  The missing polynomial models have features that are currently not supported in OSCAR, such as parameters in the exponents.

The package can be installed via git (remove line breaks):

\begin{lstlisting}[language=julia]
julia> Pkg.add(url="https://github.com/yuvraj-singh-math/
                    OscarODEbase.git")
\end{lstlisting}

Upon loading \texttt{OscarODEbase.jl}, the global variable \texttt{ODEbaseModels} contains a list of model names, which can be loaded using \texttt{get\_odebase\_model}:

\begin{lstlisting}[language=julia]
julia> using OscarODEbase;
julia> ODEbaseModels
190-element Vector{String}:
 "BIOMD0000000002"
 ...
 "BIOMD0000001038"
julia> model=get_odebase_model("BIOMD0000000854")
Entry BIOMD0000000854, with 4 species and 11 parameters.
Gray2016 - The Akt switch model
\end{lstlisting}

Each model is a struct of type \texttt{ODEbaseModel}, containing various information from \textsc{ODEbase}.  The following functions simply return the same-named information as stored in \textsc{ODEbase}:

\begin{center}
  \texttt{get\_ID},
  \texttt{get\_description},
  \texttt{get\_deficiency},
  \texttt{get\_stoichiometric\_matrix},
  \texttt{get\_reconfigured\_stoichiometric\_matrix},
  \texttt{get\_kinetic\_matrix}
\end{center}

\noindent For example:
\begin{lstlisting}[language=julia]
julia> get_stoichiometric_matrix(model)
[ 1   -1    1    0    0    0    0]
[-1    0    0    1   -1    0    0]
[ 0    1   -1    0    0    1   -1]
[ 0    0    0   -1    1   -1    1]
\end{lstlisting}

The next functions return the same-named information as stored in \textsc{ODEbase}, but as \textsc{OSCAR} polynomial data:

\begin{description}
  \item[\texttt{get\_parameter\_ring}] Return a parameter ring $\QQ[\kappa_1,\dots,\kappa_m]$ where the $\kappa_j$ represent reaction rates.
  \item[\texttt{get\_polynomial\_ring}] Return the parametrised polynomial ring $\QQ[\kappa][x]\coloneqq$ \linebreak $\QQ[\kappa_1,\dots,\kappa_m][x_1,\dots,x_n]$ where the $\kappa_j$ and $x_i$ represent reaction rates and species concentrations, respectively.
  \item[\texttt{get\_ODEs}] Return the steady state polynomials in $\QQ[\kappa][x]$, one per species.
  \item[\texttt{get\_constraints}] Return the constraints in $\QQ[\kappa][x]$.
\end{description}

\noindent For example:

\begin{lstlisting}[language=julia]
julia> get_ODEs(model)
4-element Vector{AbstractAlgebra.Generic.MPoly{...}}:
 -k1*k3*x1 + k2*x2 + k1*x3
 (-k1*k3 - k2)*x2 + k1*x4
 k1*k3*x1 + (-k1 - k2*k5)*x3 + k2*x4
 k1*k3*x2 + k2*k5*x3 + (-k1 - k2)*x4
\end{lstlisting}

Finally, the following two functions
\begin{center}
\texttt{get\_polynomials\_random\_specialization}

\texttt{get\_polynomials\_fixed\_specialization}
\end{center}
return a specialisation by choosing parameters uniformly in the range of \texttt{Int8}, and choosing parameters according to their values in \texttt{ODEbase} respectively.

Moreover, both functions have two optional Boolean parameters \texttt{constraint} and \texttt{reduce} which are \texttt{false} by default.  If \texttt{constraint==true}, both functions will add specialisations of the constraints polynomials.  If \texttt{reduce==true}, both functions will omit certain ODE polynomials that become redundant when adding the constraints. To be precise, it will omit all ODE polynomials whose species is a pivot of a row-echelon form of the coefficient matrix of the constraints polynomials.  Setting both \texttt{constraint} and \texttt{reduce} to \texttt{true} yields a square polynomial system.

The package and up-to-date instructions can be found at
\begin{center}
  \href{https://github.com/yuvraj-singh-math/OscarODEbase}{https://github.com/yuvraj-singh-math/OscarODEbase}.
\end{center}

\printbibliography
\end{document}